\documentclass[11pt,reqno]{amsart}
\usepackage{amsmath,amssymb}
\makeatletter \oddsidemargin.9375in \evensidemargin \oddsidemargin
\marginparwidth1.9375in \makeatother \textwidth 13cm
\topmargin.0in \textheight 19cm \pagestyle{myheadings}

\markboth {$~$ \hfill \footnotesize {\Small \rm  M. Eshaghi Gordji
  S. Shams, A. Ebadian and M. B. Ghaemi} \hfill $~$} {$~$ \hfill
\footnotesize {\rm On the stability of J$^*-$derivations} \hfill$~$}
\begin{document}
\begin{center}
\thispagestyle{empty} \setcounter{page}{1} {\large\bf On the
stability of J$^*-$derivations \vskip.20in
{\Small \bf $^1$M. Eshaghi Gordji, $^2$S. Shams, $^3$A. Ebadian and $^4$M. B. Ghaemi } \\[2mm]

{\footnotesize  $^1$Department of Mathematics,
Semnan University,\\ P. O. Box 35195-363, Semnan, Iran\\
e-mail: {\tt  madjid.eshaghi@gmail.com}}

{\footnotesize $^{2,3}$Department of Mathematics,
Urmia University, Urmia, Iran\\
[-1mm] e-mail: {\tt ebadian.a@gmail.com, Sa40shams@yahoo.com}}

{\footnotesize $^4$Department of Mathematics, Iran University of
Science and Technology, Narmak, Tehran, Iran\\
[-1mm] e-mail: {\tt mghaemi@iust.ac.ir}}

 }
\end{center}
\vskip 5mm \noindent{\footnotesize{\bf Abstract.} In this paper, we
establish  the stability and superstability of $J^*-$derivations in
$J^*-$algebras for the generalized  Jensen--type functional equation
$$rf(\frac{x+y}{r})+rf(\frac{x-y}{r})= 2f(x).$$
Finally, we investigate the stability of $J^*-$derivations by using
the fixed point alternative.
 \vskip.10in
\footnotetext {2000 Mathematics Subject Classification. Primary
39B82; secondary 47B48; 46L05; 39B52; 46K99; 16Wxx.}

\footnotetext {Keywords: Hyers--Ulam--Rassias stability;
$J^*-$algebra.}
\vskip.10in
\newtheorem{df}{Definition}[section]
\newtheorem{rk}[df]{Remark}
\newtheorem{lem}[df]{Lemma}
\newtheorem{thm}[df]{Theorem}
\newtheorem{pro}[df]{Proposition}
\newtheorem{cor}[df]{Corollary}
\newtheorem{ex}[df]{Example}
\setcounter{section}{0} \numberwithin{equation}{section} \vskip
.2in
\begin{center}
\section{Introduction}
\end{center}
Let $\mathcal H$, $\mathcal K$ be two Hilbert spaces and let
$\mathcal B(\mathcal H,\mathcal K)$ be the space of all bounded
operators from $\mathcal H$ into $\mathcal K$. By a $J^*-$algebra we
mean a closed subspace $\mathcal A$ of $\mathcal B(\mathcal
H,\mathcal K)$ such that $xx^*x\in \mathcal A$ whenever $x\in
\mathcal A$. Many familiar spaces are $J^*-$algebras \cite{H1}. Of
course $J^*-$algebras are not algebras in the ordinary sense.
However from the point of view they may be considered a
generalization of $C^*-$algebras; see \cite{H3, H1, H2}. In
particular any Hilbert space may be thought of as a $J^*-$algebra
identified with $\mathcal L(\mathcal H,\Bbb C)$. Also any
$C^*-$algebra in $\mathcal B(\mathcal H)$ is a $J^*-$algebra. Other
important examples of $J^*-$algebras are the so--called Cartan
factors of type I, II, III and IV. A $J^*-$derivation on a
$J^*-$algebra $\mathcal A$  is defined to be a linear mapping
$d:\mathcal A\to \mathcal A$ such that
$$d(aa^*a)=d(a)a^*a+a(d(a))^*a+aa^*d(a)$$
for all $a\in  \mathcal A$.\\ In particular, every $^*-$derivation
on a $C^*-$algebra is a $J^*-$derivation.

The stability of functional equations was first introduced  by S. M.
Ulam \cite{U} in 1940. More precisely, he proposed the following
problem: Given a group $G_1,$ a metric group $(G_2,d)$ and a
positive number $\epsilon$, does there exist a $\delta>0$ such that
if a function $f:G_1\longrightarrow G_2$ satisfies the inequality
$d(f(xy),f(x)f(y))<\delta$ for all $x,y\in G1,$ then there exists a
homomorphism $T:G_1\to G_2$ such that $d(f(x), T(x))<\epsilon$ for
all $x\in G_1?$ As mentioned above, when this problem has a
solution, we say that the homomorphisms from $G_1$ to $G_2$ are
stable. In 1941, D. H. Hyers \cite{H} gave a partial solution of
$Ulam^{,}s$ problem for the case of approximate additive mappings
under the assumption that $G_1$ and $G_2$ are Banach spaces. In
1978, Th. M. Rassias \cite{R1} generalized the theorem of Hyers by
considering the stability problem with unbounded Cauchy differences.
This phenomenon of stability that was introduced by Th. M. Rassias
\cite{R1} is called the Hyers--Ulam--Rassias stability. According to
Th. M. Rassias theorem:
\begin{thm}\label{t1} Let $f:{E}\longrightarrow{E'}$ be a mapping from
 a norm vector space ${E}$
into a Banach space ${E'}$ subject to the inequality
$$\|f(x+y)-f(x)-f(y)\|\leq \epsilon (\|x\|^p+\|y\|^p)$$
for all $x,y\in E,$ where $\epsilon$ and p are constants with
$\epsilon>0$ and $p<1.$ Then there exists a unique additive
mapping $T:{E}\longrightarrow{E'}$ such that
$$\|f(x)-T(x)\|\leq \frac{2\epsilon}{2-2^p}\|x\|^p$$ for all $x\in E.$
If $p<0$ then inequality $(1.3)$ holds for all $x,y\neq 0$, and
$(1.4)$ for $x\neq 0.$ Also, if the function $t\mapsto f(tx)$ from
$\Bbb R$ into $E'$ is continuous for each fixed $x\in E,$ then T
is linear.
\end{thm}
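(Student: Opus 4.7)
The plan is to apply Hyers' direct method. Substituting $y = x$ into the hypothesis yields $\|f(2x) - 2f(x)\| \leq 2\epsilon \|x\|^p$, equivalently $\|\tfrac{1}{2}f(2x) - f(x)\| \leq \epsilon \|x\|^p$. Iterating (replacing $x$ by $2^k x$ and dividing by $2^k$) and telescoping produces
\[
\left\| 2^{-n} f(2^n x) - f(x) \right\| \leq \epsilon \|x\|^p \sum_{k=0}^{n-1} 2^{k(p-1)}.
\]
Since $p<1$, the geometric ratio $2^{p-1}$ is less than $1$, so the series is bounded by $\tfrac{2}{2-2^p}$. The same calculation applied between indices $m<n$ shows $\{2^{-n} f(2^n x)\}$ is Cauchy; completeness of $E'$ then produces $T(x) := \lim_{n\to\infty} 2^{-n} f(2^n x)$, and passing to the limit delivers the desired bound $\|f(x) - T(x)\| \leq \frac{2\epsilon}{2-2^p}\|x\|^p$.

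Next I would verify additivity of $T$ by substituting $2^n x, 2^n y$ into the hypothesis and dividing by $2^n$: the error term is majorized by $\epsilon \cdot 2^{n(p-1)}(\|x\|^p + \|y\|^p)$, which vanishes as $n \to \infty$. For uniqueness, if $T'$ is another additive mapping satisfying the same estimate, then for every positive integer $m$ additivity gives $\|T(x)-T'(x)\| = m^{-1}\|T(mx)-T'(mx)\|$, which is bounded by a constant times $m^{p-1}\|x\|^p$ and tends to zero. The case $p<0$ runs word for word, the only change being that the substitution $y=x$ (and later $y=2^n x$) requires $x\neq 0$ so that $\|x\|^p$ makes sense; this explains why the estimate $(1.4)$ is stated only for $x\neq 0$ in that range.

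For the final linearity assertion, the additivity of $T$ already forces $\Q$-linearity. The uniform estimate $\|2^{-n} f(2^n t x) - T(tx)\| \leq \tfrac{2\epsilon}{2-2^p}|t|^p\|x\|^p$ is locally uniform in $t$, so continuity of each $t\mapsto 2^{-n} f(2^n t x)$ (inherited from the hypothesis) transfers to continuity of $t\mapsto T(tx)$, and a continuous additive function from $\R$ to $E'$ is $\R$-linear. The main obstacle is essentially bookkeeping: correctly tracking the telescoping constant $\tfrac{2}{2-2^p}$ and taking care with the $p<0$ case where $\|0\|^p$ is undefined; there is no conceptual difficulty beyond Hyers' original iteration argument.
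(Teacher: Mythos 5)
Your proof is correct, but note that the paper itself offers no proof of this statement: it is quoted verbatim in the introduction as Th.\ M.\ Rassias's theorem from \cite{R1}, so there is no in-paper argument to compare against. Your reconstruction is the standard direct (Hyers--Rassias) method --- the telescoping bound $\sum_{k=0}^{n-1}2^{k(p-1)}\le \frac{2}{2-2^{p}}$, the Cauchy-sequence definition of $T$, the limiting argument for additivity and uniqueness, and the uniform-convergence argument for continuity of $t\mapsto T(tx)$ --- and this is essentially the argument of the cited original, including the correct handling of the $p<0$ case where $x,y\neq 0$ is needed.
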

During the last decades several stability problems of functional
equations have been investigated by many mathematicians. A large
list of references concerning the stability of functional
equations can be found in \cite{Cz, H-R, H-I-R, J, R2}.\\

Recently, C$\breve{a}$dariu and Radu  applied the fixed point method
to the investigation of the  functional equations. (see also
\cite{C-R1, C-R2, C-R3, P-R, Ra, Ru}). In \cite{P3}, Park establish
the stability of homomorphisms between
 $C^*-$algebras (see also \cite{M-P, P4, P-R}).
In section 2 of the present paper, we establish  the stability and
superstability of $J^*-$derivations in $J^*-$algebras for the
generalized Jensen--type functional equation
$$rf(\frac{x+y}{r})+rf(\frac{x-y}{r})= 2f(x). \eqno(1.1)$$
 In section 3, we will use the
fixed point alternative of C\u{a}dariu  and Radu to prove the
stability and superstability  of $J^*-$derivations on $J^*-$algebras
for the generalized Jensen--type functional equation (1.1).

Throughout this paper assume that $\mathcal A$ is a $J^*-$algebra.

 \vskip 5mm
\section{Stability}

We start our work by a theorem in superstability of
$J^*-$derivations.

\begin{thm}\label{t1}
Let $r,s\in (1,\infty),$ and let  $D:\mathcal A\to \mathcal A$ be a
mapping for which $D(sa)=sD(a)$ for all $a\in  \mathcal A.$ Suppose
there exists a function $\phi:\mathcal A^3\to [0,\infty)$ such that
$$lim_ns^{-n}\phi(s^na,s^nb,s^nc)=0,$$
$$\|r\mu D(\frac{a+b}{r})+r\mu D(\frac{a-b}{r})-2D(\mu a)+D(cc^*c)-D(c)(c)^*c
-cD(c)^*c-cc^*D(c)\|\leq \phi(a,b,c), \eqno (2.1)$$ for all $\mu \in
\Bbb T$ and all $a,b,c \in  \mathcal A.$ Then $D$ is a
$J^*-$derivation.
\end{thm}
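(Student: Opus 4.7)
\medskip
\noindent\emph{Proof sketch.} The plan is to split the bound $(2.1)$ into two pieces of different homogeneity degree,
\[
M_\mu(a,b):=r\mu D\!\Bigl(\frac{a+b}{r}\Bigr)+r\mu D\!\Bigl(\frac{a-b}{r}\Bigr)-2D(\mu a),\qquad J(c):=D(cc^*c)-D(c)c^*c-cD(c)^*c-cc^*D(c),
\]
and to show that each must vanish identically: $J\equiv 0$ is the $J^*$-derivation identity, while $M_\mu\equiv 0$ for all $\mu\in\Bbb T$ will yield $\C$-linearity. The crucial observation is that under the rescaling $a\mapsto s^n a$ the piece $M_\mu$ is homogeneous of degree $1$ while $J$ is homogeneous of degree $3$, so dividing by $s^n$ cleanly separates them in the limit.

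First I would dispose of the constant: putting $a=0$ in $D(sa)=sD(a)$ gives $(s-1)D(0)=0$, whence $D(0)=0$, and iteration yields $D(s^n x)=s^n D(x)$ for every $n\in\N$ and $x\in\mathcal A$. Next I replace $(a,b,c)$ by $(s^n a,s^n b,s^n c)$ in $(2.1)$. Using the identities $(s^n c)(s^n c)^*(s^n c)=s^{3n}cc^*c$ and $D(s^n c)^*=s^n D(c)^*$ together with the $s^n$-homogeneity of $D$, the inequality becomes
\[
\bigl\|s^n M_\mu(a,b)+s^{3n}J(c)\bigr\|\le\phi(s^n a,s^n b,s^n c).
\]
Dividing by $s^n$ and using the decay hypothesis, the right-hand side tends to $0$; since $s^{2n}\to\infty$, the only way $\|M_\mu(a,b)+s^{2n}J(c)\|\to0$ can hold is if $J(c)=0$ for every $c\in\mathcal A$. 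Once $J\equiv 0$ is known, the same inequality (say at $c=0$) forces $M_\mu(a,b)=0$ for every $\mu\in\Bbb T$ and all $a,b\in\mathcal A$.

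To extract linearity from $M_\mu\equiv 0$, I set $\mu=1$ and $b=0$ to obtain $rD(a/r)=D(a)$; substituting this back into $M_1(a,b)=0$ collapses it to the Jensen identity $D(a+b)+D(a-b)=2D(a)$. Together with $D(0)=0$ and the standard substitution $a\leftarrow\frac{u+v}{2}$, $b\leftarrow\frac{u-v}{2}$ this gives additivity of $D$. Feeding additivity back into $M_\mu(a,b)=0$ for general $\mu\in\Bbb T$ yields $D(\mu a)=\mu D(a)$, and this $\Bbb T$-equivariance combined with additivity and the given $s$-homogeneity delivers $\C$-linearity by the usual argument.

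The main obstacle, in my view, is precisely the entanglement of the two pieces inside a single norm in $(2.1)$: one cannot usefully put $c=0$ at the outset because that gives no information about $J$, and one cannot estimate $J$ in isolation without first controlling $M_\mu$. The scaling trick circumvents this by exploiting the gap between the linear and cubic growth rates to force the cubic piece to vanish on its own, after which the residue is a by-now classical stability problem for the generalised Jensen equation.
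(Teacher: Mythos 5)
Your argument is correct and is essentially the paper's: both rest on the homogeneity $D(s^n x)=s^nD(x)$ together with the decay of $s^{-n}\phi(s^na,s^nb,s^nc)$, followed by the standard passage from the generalised Jensen equation plus $\Bbb T$-homogeneity to $\C$-linearity. The only difference is organizational: the paper isolates the cubic part by substituting $a=b=0$ (legitimate, since $D(0)=0$) and the Jensen part by substituting $c=0$, whereas you scale all three variables simultaneously and invoke the degree gap $s^{2n}\to\infty$ --- so the ``entanglement'' you single out as the main obstacle is in fact dissolved even more directly in the paper by substitution.
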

\begin{proof}
Put $\mu=a=b=0$ in (2.1). Then
\begin{align*}\|&D(cc^*c)-D(c)c^*c-cD(c^*)c-cc^*D(c)\|\\&=\frac{1}{s^{3n}}\|D((s^nc)(s^nc^*)(s^nc))-D(s^nc)(s^nc^*)(s^nc)
-(s^nc)D(s^nc^*)(s^nc)\\
&-(s^nc)(s^nc^*)D(s^nc)\|\leq  \frac{1}{s^{3n}}\phi(0,0,s^nc)\leq
\frac{1}{s^n}\phi(0,0,s^nc)
\end{align*}
for all $c \in  \mathcal A.$  The right--hand side tends to zero as
$n\to \infty.$ So
$$D(cc^*c)=D(c)c^*c+cD(c^*)c+cc^*D(c)$$
for all $c\in  \mathcal A.$ Similarly, one can show that
$$r\mu D(\frac{a+b}{r})+r\mu D(\frac{a-b}{r})=2D(\mu a)$$
for all $\mu \in \Bbb T$ and all $a,b \in  \mathcal A.$ Hence, $D$
is linear.
\end{proof}

\begin{thm}\label{t1}
Let $r\in (1,\infty),$ and let  $f:\mathcal A\to \mathcal A$ be a
mapping with $f(0)=0$ for which
 there exists a function
$\phi:\mathcal A^3\to [0,\infty)$ such that
$$\Phi(a,b,c):=\sum_0^{\infty} 2^{-n}\phi(2^na,2^nb,2^nc)<\infty,$$
$$\|r\mu f(\frac{a+b}{r})+r\mu f(\frac{a-b}{r})-2f(\mu a)+f(cc^*c)-f(c)(c)^*c
-cf(c)^*c-cc^*f(c)\|\leq \phi(a,b,c), \eqno (2.2)$$ for all $\mu \in
\Bbb T$ and all $a,b,c \in  \mathcal A.$ Then there exists a unique
$J^*-$derivation $D:\mathcal A\to \mathcal A$ such that
$$\|f(a)-D(a)\| \leq \Phi(a,a,0)\eqno(2.3)$$
for all $a \in  \mathcal A.$
\end{thm}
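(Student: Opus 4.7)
The plan is to mimic the classical direct method of Hyers: set
\begin{align*}
D(a) := \lim_{n \to \infty} 2^{-n} f(2^n a),
\end{align*}
show the limit exists, verify the bound (2.3), and then check that $D$ is a $\mathbb C$-linear $J^*$-derivation. To set up the limit I would start from (2.2) with $\mu = 1$ and $c = 0$; the substitutions $b = a$ and $b = 0$ (with $a$ possibly rescaled by $r$) combine to yield a one-step Hyers-type estimate of the shape $\|f(a) - \tfrac{1}{2} f(2a)\| \leq \text{const}\cdot\phi(a, a, 0)$. Iterating and telescoping produces
\begin{align*}
\|2^{-n} f(2^n a) - 2^{-m} f(2^m a)\| \leq \sum_{k=n}^{m-1} 2^{-k} \phi(2^k a, 2^k a, 0),
\end{align*}
which by finiteness of $\Phi(a, a, 0)$ makes the sequence Cauchy in the closed subspace $\mathcal A \subseteq \mathcal B(\mathcal H, \mathcal K)$, so $D(a)$ is well defined; letting $n = 0$ and $m \to \infty$ delivers (2.3).

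For $\mathbb C$-linearity I would replace $(a, b)$ by $(2^n a, 2^n b)$ in (2.2) with $c = 0$ and general $\mu \in \mathbb T$, divide by $2^n$, and pass to the limit; the hypothesis forces $2^{-n} \phi(2^n a, 2^n b, 0) \to 0$, so
\begin{align*}
r\mu D\!\left(\tfrac{a+b}{r}\right) + r\mu D\!\left(\tfrac{a-b}{r}\right) = 2 D(\mu a).
\end{align*}
The case $\mu = 1$ together with $D(0) = 0$ yields Cauchy-additivity by a standard Jensen trick; the general case then gives $D(\mu a) = \mu D(a)$ for all $\mu \in \mathbb T$, and combining the two (as in \cite{P3}) upgrades $D$ to a $\mathbb C$-linear map. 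Uniqueness is immediate: if $D'$ is another such $J^*$-derivation satisfying (2.3), then by linearity $\|D(a) - D'(a)\| = 2^{-n}\|D(2^n a) - D'(2^n a)\| \leq 2 \cdot 2^{-n} \Phi(2^n a, 2^n a, 0)$, which is the tail of a convergent series and hence tends to zero.

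For the $J^*$-identity I would set $a = b = 0$ in (2.2), replace $c$ by $2^n c$, and divide by $2^{3n}$, obtaining
\begin{align*}
\bigl\|2^{-3n} f(2^{3n} cc^*c) - 2^{-n} f(2^n c)\, c^* c - c\, (2^{-n} f(2^n c))^* c - cc^* (2^{-n} f(2^n c))\bigr\| \leq 2^{-3n} \phi(0, 0, 2^n c).
\end{align*}
Each $2^{-n} f(2^n c) \to D(c)$ by definition, the right-hand side is dominated by $2^{-n} \phi(0, 0, 2^n c) \to 0$, and $2^{-3n} f(2^{3n} cc^*c)$ converges to $D(cc^*c)$ as a subsequence of the defining sequence for $D$ at the point $cc^*c$. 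The main technical obstacle lies precisely here: the cubic scaling of $cc^*c$ in $c$ does not match the linear scaling used to define $D$, so one must justify the simultaneous convergence via a subsequence and the estimate $2^{-3n} \phi(0, 0, 2^n c) \leq 2^{-n} \phi(0, 0, 2^n c)$, the latter being summable by the hypothesis on $\Phi$. Beyond this, the rest of the argument is a careful but routine reprise of the classical Hyers--Rassias scheme.
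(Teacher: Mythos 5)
Your proposal is correct and follows the same direct Hyers-type scheme as the paper: obtain a one-step contraction estimate from (2.2) with $\mu=1$, $c=0$, define $D$ as the limit of the rescaled iterates, pass to the limit in (2.2) to get the Jensen equation, $\mathbb{T}$-homogeneity and the $J^*$-identity, the last via exactly the same observation that $(\lambda c)(\lambda c)^*(\lambda c)$ scales cubically so the error $\lambda^{-3n}\phi(0,0,\lambda^n c)\le \lambda^{-n}\phi(0,0,\lambda^n c)\to 0$. The one substantive difference is the choice of scaling base: you iterate doubling, $D(a)=\lim_n 2^{-n}f(2^n a)$, whereas the paper iterates $D(a)=\lim_n r^{-n}f(r^n a)$. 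Your choice is the one that actually matches the hypothesis, since $\Phi(a,b,c)=\sum_n 2^{-n}\phi(2^n a,2^n b,2^n c)<\infty$ controls the tails of the doubling iteration but says nothing about $\sum_n r^{-n}\phi(r^n a,0,0)$ for general $r$; as written, the paper's Cauchy-sequence argument quietly needs a convergence hypothesis it never states, so your route is, if anything, the more faithful one. You also supply a uniqueness argument (divide the two candidate derivations' discrepancy by $2^n$ and observe the bound becomes a tail of the convergent series), which the paper asserts in the statement but never proves. Two small caveats, neither fatal: your one-step estimate, worked out from the $b=a$ and $b=0$ substitutions after rescaling, produces a bound of the form $\tfrac12\phi(a,a,0)+\tfrac14\phi(2a,0,0)$ rather than a constant times $\phi(a,a,0)$ alone, so the telescoped bound is $\tfrac12\Phi(a,a,0)+\tfrac12\Phi(a,0,0)$ rather than literally $\Phi(a,a,0)$ as in (2.3) --- but the paper's own proof does not deliver the stated constant either, so this is an inherited looseness rather than a defect of your argument; and the step ``additive plus $\mathbb{T}$-homogeneous implies $\mathbb{C}$-linear'' deserves the standard one-line citation, which you give and the paper waves at with ``it is easy to show.''
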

\begin{proof}
Put $\mu=1$ and $b=c=0$ in (2.2). It follows that
$$\|f(a)-r^{-1}f(ra)\|\leq \frac{1}{2}\phi(ra,0,0)\eqno (2.4)$$
for all $a\in  \mathcal A.$ By induction, we can show that
$$\|f(a)-r^{-n}f(r^na)\|\leq \frac{1}{2}\sum_1^n\phi(r^na,0,0)\eqno (2.5)$$
for all $a\in  \mathcal A.$ Replace $a$ by $a^m$ in (2.5) and then
divide by $r^m$, we get
$$\|f(a^m)-r^{-n-m}f(r^{n+m}a)\|\leq \frac{1}{2r^m}\sum_m^{m+n}\phi(r^{k}a,0,0)$$
for all $a\in  \mathcal A.$ Hence, $\{r^{-n}f(r^na)\}$ is a Cauchy
sequence. Since $A$ is complete, then
$$D(a):=lim_n r^{-n}f(r^na)$$
exists for all $a\in  \mathcal A.$ By using (2.1) one can show that

\begin{align*}\|&rD(\frac{a+b}{r})+ rD(\frac{a-b}{r})- 2D( a)\| \\
&=lim_n
\frac{1}{r^n}\|rf(r^{n-1}(a+b))+rf(r^{n-1}(a-b))-2f(r^na)\|\\
&\leq lim_n\frac{1}{r^n}\phi(r^na,r^nb,0)=0
\end{align*}
for all $a,b \in  \mathcal A.$ So $$rD(\frac{a+b}{r})+
rD(\frac{a-b}{r})= 2D( a)$$ for all $a,b \in  \mathcal A.$ Put
$U=\frac{a+b}{r}, V=\frac{a-b}{r}$ in above equation, we get
$$r(D(U)+D(V))=2rD(\frac{U+V}{2}) $$ for all $U,V\in  \mathcal A.$
 Hence, $D$ is
Cauchy additive. On the other hand, we have
$$\|D(\mu a)-\mu D(a)\|=lim_n
\frac{1}{r^n}\|f(\mu r^n a)-\mu f(r^n a)\|\leq
lim_n\frac{1}{r^n}\phi(r^na,r^na,0)=0$$ for all $\mu \in \Bbb T$,
and all $a\in  \mathcal A.$ So it is easy to show that  $D$ is
linear. It follows from (2.1) that
\begin{align*}\|&D(cc^*c)-D(c)c^*c-cD(c^*)c-cc^*D(c)\|\\
&=lim_n\|\frac{1}{r^{3n}}f((r^nc)(r^nc^*)(r^nc))-\frac{1}{r^{n}}f(r^nc)\frac{r^nc^*}{r^{n}}\frac{r^nc}{r^{n}}
-\frac{r^nc}{r^{n}}\frac{1}{r^{n}}f(r^nc^*)\frac{r^nc}{r^{n}}\\
&-\frac{r^nc}{r^{n}}\frac{r^nc^*}{r^{n}}\frac{1}{r^{n}}f(r^nc)\|
\leq lim_n \frac{1}{r^{3n}}\phi(0,0,r^nc)\leq lim_n \frac{1}{r^n}\phi(0,0,r^nc)\\
&=0
\end{align*}
for all $c \in  \mathcal A.$  Thus $D:\mathcal A\to \mathcal A$ is a
$J^*-$derivation satisfying (2.3), as desired.
\end{proof}

We prove the following Hyers--Ulam--Rassias stability problem for
$J^*-$derivations on  $J^*-$algebras.

\begin{cor}\label{t2}
Let $p\in (0,1), \theta \in [0,\infty)$ be real numbers. Suppose
$f:A \to A$ satisfies $$ \|r\mu f(\frac{a+b}{r})+r\mu
f(\frac{a-b}{r})-2f(\mu a)+f(cc^*c)-f(c)(c)^*c -cf(c)^*c-cc^*f(c)\|
\leq \theta(\|a\|^p+\|b\|^p+\|c^p\|),$$ for all $\mu \in \Bbb T$ and
all $a,b,c \in  \mathcal A.$  Then there exists a unique
$J^*-$derivation $D:\mathcal A\to \mathcal A$ such that
$$\|f(a)-D(a)\| \leq \frac{2^p\theta}{2^{p-1}-1}\|a\|^p$$
for all $a \in  \mathcal A.$
\end{cor}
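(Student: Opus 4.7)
The plan is to invoke Theorem~1.2 with the control function $\phi(a,b,c):=\theta(\|a\|^p+\|b\|^p+\|c\|^p)$, so that the inequality assumed for $f$ in the corollary becomes precisely hypothesis (2.2) of that theorem. All that remains is to verify the other hypotheses of Theorem~1.2 for this choice of $\phi$ and to evaluate the resulting error bound $\Phi(a,a,0)$.

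First I would verify the side condition $f(0)=0$ which is needed to apply Theorem~1.2. Putting $\mu=1$ and $a=b=c=0$ into the given estimate yields $\|2(r-1)f(0)\|\le \phi(0,0,0)=0$, and since $r>1$ this forces $f(0)=0$. Next I would check the summability of
\[
\Phi(a,b,c)=\sum_{n=0}^{\infty}2^{-n}\phi(2^na,2^nb,2^nc).
\]
Using positive $p$-homogeneity of $\|\cdot\|^p$, each summand equals $2^{n(p-1)}\theta(\|a\|^p+\|b\|^p+\|c\|^p)$, so $\Phi$ is a convergent geometric series precisely because $2^{p-1}<1$, i.e.\ $p<1$, which is the hypothesis.

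With both hypotheses confirmed, Theorem~1.2 produces a unique $J^*$-derivation $D:\mathcal A\to\mathcal A$ satisfying $\|f(a)-D(a)\|\le \Phi(a,a,0)$. Summing the geometric series at $(a,a,0)$ gives $\Phi(a,a,0)=\frac{2\theta}{1-2^{p-1}}\|a\|^p$, which after rearrangement yields the constant displayed in the statement.

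I do not anticipate any substantive obstacle, since the corollary is essentially a specialization of Theorem~1.2 to a power-type control function; the only technical points are the elementary verification of $f(0)=0$ needed before Theorem~1.2 can be applied and the arithmetic simplification of the geometric-series sum to match the exact form of the constant claimed in the corollary.
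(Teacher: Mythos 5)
Your approach is exactly the paper's: the entire published proof is the single line ``set $\phi(a,b,c):=\theta(\|a\|^p+\|b\|^p+\|c\|^p)$ in the above theorem,'' and you do the same specialization while additionally checking $f(0)=0$ and the convergence of $\Phi$ --- details the paper omits. (Minor slip: with $a=b=c=0$ and $\mu=1$ the left side is $\|(2r-1)f(0)\|$, not $\|2(r-1)f(0)\|$, because $f(cc^*c)$ contributes an extra $f(0)$; the conclusion $f(0)=0$ is unaffected since $r>1$.)

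The one step that does not go through is your final assertion that $\Phi(a,a,0)=\frac{2\theta}{1-2^{p-1}}\|a\|^p$ ``after rearrangement yields the constant displayed in the statement.'' It does not: your (correct) value equals $\frac{4\theta}{2-2^p}\|a\|^p$, whereas the corollary claims $\frac{2^p\theta}{2^{p-1}-1}\|a\|^p$, which is \emph{negative} for $p\in(0,1)$ and hence cannot bound a norm; even after fixing the sign it would read $\frac{2^{p+1}\theta}{2-2^p}\|a\|^p$, which differs from yours by a factor of $2^{p-1}$. This is a defect in the paper's statement (note the analogous corollary in Section~3 prints $\frac{2^p\theta}{2-2^p}$ instead), not in your method, but you should state the bound you actually derive rather than claim an algebraic identity that fails. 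A further caveat you inherit from the paper: Theorem~2.2 defines $\Phi$ with dilations by $2^n$ while its proof builds $D$ from dilations by $r^n$, so the clean geometric-series evaluation you perform really presupposes $r=2$ (or a corrected statement of the theorem with $r^n$ in place of $2^n$).
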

\begin{proof}
Setting $\phi(a,b,c):=\theta(\|a\|^p+\|b\|^p+\|c\|^p)$ for all
$a,b,c \in  \mathcal A,$ in above theorem.
\end{proof}

 \vskip 5mm
\section{Stability by Using Alternative Fixed Point}

 Before proceeding to the
main results of this section, we will state the following theorem.

\begin{thm}\label{t2}(The alternative of fixed point \cite{C-R}).
Suppose that we are given a complete generalized metric space
$(\Omega,d)$ and a strictly contractive mapping
$T:\Omega\rightarrow\Omega$ with Lipschitz constant $L$. Then for
each given $x\in\Omega$, either

$d(T^m x, T^{m+1} x)=\infty~$ for all $m\geq0,$\\
 or other exists a natural number $m_{0}$ such that\\
$\star\hspace{.25cm} d(T^m x, T^{m+1} x)<\infty ~$for all $m \geq m_{0};$ \\
 $\star\hspace{.1cm}$ the sequence $\{T^m x\}$ is convergent to a fixed point $y^*$ of $~T$;\\
$\star\hspace{.2cm} y^*$is the unique fixed point of $~T$ in the
set $~\Lambda=\{y\in\Omega:d(T^{m_{0}} x, y)<\infty\};$\\
$\star\hspace{.2cm} d(y,y^*)\leq\frac{1}{1-L}d(y, Ty)$ for all
$~y\in\Lambda.$
\end{thm}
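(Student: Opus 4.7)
\medskip
\noindent\textbf{Proof plan.} My approach is to adapt the usual Banach contraction argument to the generalized metric setting, whose only wrinkle is that $d$ may take the value $\infty$. The dichotomy in the statement is essentially forced by this: either every successive distance $d(T^m x, T^{m+1}x)$ is infinite, or one of them is finite and the contraction estimate $d(Tu,Tv)\le L\,d(u,v)$ propagates the finiteness and geometric decay to all subsequent iterates.

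First I would handle the trivial alternative: if $d(T^m x, T^{m+1}x)<\infty$ for some $m$, let $m_0$ be the smallest such index. For $m\ge m_0$, contraction yields
$$d(T^{m+1}x, T^{m+2}x)\le L\,d(T^m x, T^{m+1}x),$$
so by induction $d(T^m x, T^{m+1}x)\le L^{m-m_0}\,d(T^{m_0}x, T^{m_0+1}x)<\infty$ for every $m\ge m_0$, which establishes the first bullet. Summing a geometric series then gives, for $m\ge m_0$ and $k\ge 1$,
$$d(T^m x, T^{m+k}x)\le \frac{L^{m-m_0}}{1-L}\,d(T^{m_0}x, T^{m_0+1}x),$$
so $\{T^m x\}_{m\ge m_0}$ is Cauchy in the ordinary sense (all pairwise distances are finite and tend to $0$). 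By completeness of $(\Omega,d)$, the sequence converges to some $y^*\in\Omega$. Since every strict contraction is continuous, $Ty^*=\lim_m T^{m+1}x=y^*$, giving the second bullet.

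For the uniqueness bullet, suppose $y'\in\Lambda$ is another fixed point of $T$. Then $d(T^{m_0}x, y')<\infty$ and, passing to the limit along $T^n$, one obtains $d(y^*, y')<\infty$; contractivity of $T$ then gives $d(y^*,y')=d(Ty^*,Ty')\le L\,d(y^*,y')$, forcing $d(y^*,y')=0$. For the final estimate, fix $y\in\Lambda$. The triangle inequality (valid because the relevant distances are finite within $\Lambda$) gives
$$d(y,y^*)\le d(y,Ty)+d(Ty,Ty^*)\le d(y,Ty)+L\,d(y,y^*),$$
and rearranging yields $d(y,y^*)\le \tfrac{1}{1-L}d(y,Ty)$.

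\medskip
\noindent\textbf{Main obstacle.} The only genuinely delicate point is bookkeeping with the possibly infinite distances: one must verify that the triangle inequality and the Cauchy/limit manipulations above are applied only to pairs whose distance is already known to be finite. Choosing $m_0$ as the first index with $d(T^{m_0}x,T^{m_0+1}x)<\infty$ and working throughout inside $\Lambda=\{y:d(T^{m_0}x,y)<\infty\}$ isolates a genuine metric subspace on which the classical Banach argument runs without modification; everything else is routine.
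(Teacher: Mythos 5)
Your proof is correct: it is the standard Diaz--Margolis argument, and you handle the one genuine subtlety (tracking which distances are finite before applying the triangle inequality or rearranging $d(y,y^*)\le d(y,Ty)+L\,d(y,y^*)$) appropriately, since $d(T^{m_0}x,y^*)<\infty$ follows from the geometric-series bound and hence $d(y,y^*)<\infty$ for every $y\in\Lambda$. Note that the paper itself gives no proof of this statement at all --- it is quoted verbatim as a known result with a citation to C\u{a}dariu and Radu --- so there is nothing to compare against; your argument is exactly the proof one finds in that source.
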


\begin{thm}\label{t1}
Let $f:\mathcal A\to \mathcal A$ be a mapping for which there exists
a function $\phi:\mathcal A^3\to [0,\infty)$ such that

$$\|r\mu f(\frac{a+b}{r})+r\mu f(\frac{a-b}{r})-2f(\mu a)+f(cc^*c)-f(c)(c)^*c
-cf(c)^*c-cc^*f(c)\|\leq \phi(a,b,c), \eqno (3.1)$$ for all $\mu \in
\Bbb T$ and all $a,b,c \in  \mathcal A.$ If  there exists an $L<1$
such that
$$\phi(a,b,c)\leq rL \phi(\frac{a}{r},\frac{b}{r},\frac{c}{r}) \eqno
(3.2)$$ for all $a,b,c\in  \mathcal A,$ then there exists a unique
$J^*-$derivation $D:\mathcal A\to \mathcal A$ such that
$$\|f(a)-D(a)\| \leq \frac{L}{1-L}\phi(a,0,0)\eqno(3.3)$$
for all $a \in  \mathcal A.$
\end{thm}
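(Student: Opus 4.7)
The plan is to follow the C\u{a}dariu--Radu fixed-point recipe, so that Theorem 3.1 replaces the direct summation used in Theorem 2.2. On the set $\Omega = \{g : \mathcal{A} \to \mathcal{A}\}$ I would impose the generalized metric
$$d(g, h) = \inf\{C \in [0, \infty] : \|g(a) - h(a)\| \leq C\phi(a, 0, 0) \text{ for all } a \in \mathcal{A}\},$$
which is well known to make $(\Omega, d)$ a complete generalized metric space, and define $T : \Omega \to \Omega$ by $(Tg)(a) := r^{-1} g(ra)$. The two facts to extract from the hypotheses are the strict contractivity of $T$ and the finiteness of $d(f, Tf)$.

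Contractivity falls straight out of (3.2): if $\|g(a) - h(a)\| \leq C\phi(a, 0, 0)$ for all $a$, then $\|(Tg)(a) - (Th)(a)\| = r^{-1}\|g(ra) - h(ra)\| \leq r^{-1} C\phi(ra, 0, 0) \leq L C\phi(a, 0, 0)$, so $T$ is $L$-Lipschitz. For the second point, I would put $\mu = 1$ and $b = c = 0$ in (3.1) (first arranging $f(0) = 0$, as in Theorem 2.2), replace $a$ by $ra$, and apply (3.2) once more to reach an inequality of the form $\|f(a) - r^{-1}f(ra)\| \leq (\text{const}\cdot L)\,\phi(a, 0, 0)$, so that $d(f, Tf) < \infty$. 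Theorem 3.1 then produces a unique fixed point $D$ of $T$ in $\Lambda = \{g : d(f, g) < \infty\}$, given concretely by $D(a) = \lim_n r^{-n} f(r^n a)$, together with the estimate $d(f, D) \leq \frac{1}{1-L} d(f, Tf)$ that yields (3.3).

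What remains is to promote $D$ from this mere limit to a genuine $J^*$-derivation. The key observation is that iterating (3.2) gives $r^{-n}\phi(r^n a, r^n b, r^n c) \leq L^n \phi(a, b, c) \to 0$. Substituting $(r^n a, r^n b, r^n c)$ into (3.1), dividing by the appropriate power of $r$, and letting $n \to \infty$ therefore forces the Jensen piece, the $\Bbb T$-homogeneity piece, and the cubic derivation piece each to vanish in the limit, exactly as in Theorem 2.2. The substitution $U = (a+b)/r$, $V = (a-b)/r$ then turns the Jensen identity into Cauchy additivity, and $\Bbb T$-homogeneity combined with additivity upgrades $D$ to a $\Bbb C$-linear map. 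Uniqueness of $D$ is built into Theorem 3.1.

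I expect the main nuisance to be the cubic derivation identity: each of $f(cc^*c)$, $f(c)c^*c$, $cf(c)^*c$, $cc^*f(c)$ scales by $r^{3n}$ under $c \mapsto r^n c$, so the estimate coming from (3.1) takes the form $r^{-3n}\phi(0, 0, r^n c)$, which is dominated by $r^{-n}\phi(0, 0, r^n c) \leq L^n \phi(0, 0, c) \to 0$. The computation itself is routine, but the indexing requires care; no genuinely new idea beyond the calculation already appearing in Theorem 2.2 is needed.
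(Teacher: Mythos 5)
Your proposal follows the same route as the paper: the same generalized metric on $\{g:\mathcal A\to\mathcal A\}$, the same contraction $g\mapsto r^{-1}g(r\,\cdot)$ with Lipschitz constant $L$ extracted from (3.2), the estimate $d(f,Tf)<\infty$ from the substitution $\mu=1$, $b=c=0$ followed by $a\mapsto ra$, and the passage to the derivation property via $r^{-n}\phi(r^na,r^nb,r^nc)\le L^n\phi(a,b,c)\to 0$ exactly as in Theorem 2.2. The argument is correct and essentially identical to the paper's proof (your handling of the cubic term's $r^{3n}$ scaling is the same domination used there), so no further comment is needed.
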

\begin{proof}
Put $\mu =1, b=c=0$ in (3.1) to obtain
$$\|2rf(\frac{a}{r})-2f(a)\|\leq \phi(a,0,0)\eqno (3.4)$$
for all $a\in  \mathcal A.$ Hence,
$$\|\frac{1}{r}f(ra)-f(a)\|\leq \frac{1}{2r} \phi(ra,0,0)\leq L\phi(ra,0,0)\eqno (3.5)$$
for all $a\in  \mathcal A.$\\
Consider the set $X:=\{g\mid g:A\to \mathcal A\}$ and introduce the
generalized metric on X:
$$d(h,g):=inf\{C\in \Bbb R^+:\|g(a)-h(a)\|\leq C\phi(a,0,0) \forall a\in  \mathcal A\}.$$
It is easy to show that $(X,d)$ is complete. Now we define  the
linear mapping $J:X\to X$ by $$J(h)(a)=\frac{1}{r}h(ra)$$ for all
$a\in  \mathcal A$. By Theorem 3.1 of \cite{C-R}, $$d(J(g),J(h))\leq
Ld(g,h)$$ for
all $g,h\in X.$\\
It follows from (3.5) that  $$d(f,J(f))\leq L.$$ By Theorem 3.1, $J$
has a unique fixed point in the set $X_1:=\{h\in X: d(f,h)<
\infty\}$. Let $D$ be the fixed point of $J$. $D$ is the unique
mapping with
$$D(ra)=rD(a)$$ for all $a\in  \mathcal A$ satisfying there exists $C\in
(0,\infty)$ such that
$$\|D(a)-f(a)\|\leq C\phi(a,0,0)$$ for all $a\in  \mathcal A$. On the other hand we
have $lim_n d(J^n(f),D)=0$. It follows that
$$lim_n\frac{1}{2^n}{f(2^na)}=D(a)\eqno (3.6)$$
for all $a\in  \mathcal A$. It follows from $d(f,h)\leq
\frac{1}{1-L}d(f,J(f)),$ that $$d(f,h)\leq \frac{L}{1-L}.$$ This
implies the inequality (3.3).

It follows from (3.2) that
$$lim_j r^{-j}\phi(r^ja,r^jb,r^jc)=0 \eqno(3.7)$$ for all $a,b,c\in  \mathcal A.$\\

By a same reasoning as Theorem 2.2, one can show  that the mapping
$D:\mathcal A\to \mathcal A$ is  a $J^*-$derivation satisfying
(3.3), as desired.
\end{proof}
We prove the following Hyers--Ulam--Rassias stability problem for
$J^*-$derivations on  $J^*-$algebras.

\begin{cor}\label{t2}
Let $p\in (0,1), \theta \in [0,\infty)$ be real numbers. Suppose
$f:A \to A$ satisfies $$ \|r\mu f(\frac{a+b}{r})+r\mu
f(\frac{a-b}{r})-2f(\mu a)+f(cc^*c)-f(c)(c)^*c -cf(c)^*c-cc^*f(c)\|
\leq \theta(\|a\|^p+\|b\|^p+\|c^p\|),$$ for all $\mu \in \Bbb T$ and
all $a,b,c \in  \mathcal A.$  Then there exists a unique
$J^*-$derivation $D:\mathcal A\to \mathcal A$ such that
$$\|f(a)-D(a)\| \leq \frac{2^p\theta}{2-2^p}\|a\|^p$$
for all $a \in  \mathcal A.$
\end{cor}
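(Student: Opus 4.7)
The plan is to instantiate Theorem~3.2 with the explicit choice
$\phi(a,b,c):=\theta(\|a\|^{p}+\|b\|^{p}+\|c\|^{p})$. With this $\phi$, the hypothesis $(3.1)$ of Theorem~3.2 is exactly the inequality assumed in the corollary, so no work is needed to verify it; everything reduces to checking the contraction hypothesis $(3.2)$ and then reading off the resulting bound.

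The substantive step is the verification of $(3.2)$. Positive $p$-homogeneity of the norm gives
$$\phi\!\left(\tfrac{a}{r},\tfrac{b}{r},\tfrac{c}{r}\right)=r^{-p}\phi(a,b,c),$$
so I would set $L:=r^{p-1}$. Since $r>1$ (as fixed throughout the paper) and $p\in(0,1)$, the exponent $p-1$ is negative, hence $L\in(0,1)$. A single line then shows that $(3.2)$ holds, in fact with equality:
$$rL\,\phi\!\left(\tfrac{a}{r},\tfrac{b}{r},\tfrac{c}{r}\right)=r^{1-p}\cdot r^{p-1}\phi(a,b,c)=\phi(a,b,c).$$

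With $(3.1)$ and $(3.2)$ in hand, Theorem~3.2 immediately supplies a unique $J^{*}$-derivation $D:\mathcal A\to\mathcal A$ satisfying
$$\|f(a)-D(a)\|\le\frac{L}{1-L}\phi(a,0,0)=\frac{r^{p-1}\theta}{1-r^{p-1}}\|a\|^{p}=\frac{\theta}{r^{1-p}-1}\|a\|^{p}.$$
To match the form displayed in the statement (whose constants correspond to $r=2$) I would finish by using the identity $\dfrac{1}{2^{1-p}-1}=\dfrac{2^{p}}{2-2^{p}}$, which gives exactly $\dfrac{2^{p}\theta}{2-2^{p}}\|a\|^{p}$.

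I do not anticipate any real obstacle. Once Theorem~3.2 is granted, the whole argument is a matter of exploiting the homogeneity of $\phi$ and identifying the optimal Lipschitz constant $L=r^{p-1}$; the only mild care-point is to notice that $r>1$ together with $p<1$ is precisely what forces $L<1$, so that the fixed point machinery of Theorem~3.2 applies.
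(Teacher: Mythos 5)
Your proposal follows the paper's own route exactly: both instantiate Theorem~3.2 with $\phi(a,b,c)=\theta(\|a\|^{p}+\|b\|^{p}+\|c\|^{p})$ and read the error bound off as $\frac{L}{1-L}\phi(a,0,0)$. The only point of divergence is the choice of Lipschitz constant. The paper simply declares $L=2^{p-1}$, which reproduces the displayed constant $\frac{2^{p}\theta}{2-2^{p}}=\frac{\theta}{2^{1-p}-1}$ but verifies the contraction hypothesis $(3.2)$ only when $r\ge 2$: homogeneity turns $(3.2)$ into the requirement $L\ge r^{p-1}$, and for $1<r<2$ one has $2^{p-1}<r^{p-1}$, so the paper's $L$ fails there. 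Your choice $L=r^{p-1}$ is the one actually forced by the computation, and it yields $\|f(a)-D(a)\|\le\frac{\theta}{r^{1-p}-1}\|a\|^{p}$, which, as you note, coincides with the stated constant precisely when $r=2$ and is strictly stronger when $r>2$. So your argument is the careful version of the paper's one-line proof; the residual mismatch for $1<r<2$ (where the corollary's displayed constant does not follow from Theorem~3.2 with this $\phi$) is a defect of the statement as printed, which tacitly assumes $r=2$, not a gap in your reasoning.
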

\begin{proof}
Setting $\phi(a,b,c):=\theta(\|a\|^p+\|b\|^p+\|c^p)$ all $a,b,c \in
\mathcal A.$ Then by $L=2^{p-1}$, we get the desired result.
\end{proof}

Now we establish the superstability of $J^*-$derivations by using
the alternative of fixed point.

\begin{thm}\label{t3}
Let $s>1$, and let  $f:\mathcal A\to \mathcal A$ be a mapping
satisfying $f(sx)=sf(x)$ for all $x\in  \mathcal A$. Let
$\phi:\mathcal A^3\to [0,\infty)$  be a mapping satisfying  (3.1).
If  there exists an $L<1$ such that
$$\phi(x,y,z)\leq rL \phi(\frac{x}{r},\frac{y}{r},\frac{z}{r}) $$
for all $x,y,z\in  \mathcal A,$ then $f$ is  a  $J^*-$derivation.
\end{thm}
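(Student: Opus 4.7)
The plan is to mimic the superstability argument of Theorem 2.1: use the homogeneity $f(sx)=sf(x)$ to iterate and use the decay of $\phi$ (now forced by the contraction hypothesis (3.2)) to push the error in (3.1) to zero. Iterating (3.2) gives $\phi(r^n x, r^n y, r^n z)\leq (rL)^n\phi(x,y,z)$, hence $r^{-n}\phi(r^n x, r^n y, r^n z)\leq L^n\phi(x,y,z)\to 0$; under the natural matching of the $r$- and $s$-scales this yields the decay $\lim_{n\to\infty}s^{-n}\phi(s^n a, s^n b, s^n c)=0$ that powered Theorem 2.1.

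With this decay in hand, I would split (3.1) into its two natural pieces. Setting $c=0$ and replacing $(a,b)$ by $(s^n a, s^n b)$, the homogeneity extracts a factor $s^n$ on the left, so division by $s^n$ and passage to the limit give
$$r\mu f\!\left(\tfrac{a+b}{r}\right)+r\mu f\!\left(\tfrac{a-b}{r}\right)=2f(\mu a)\qquad(\mu\in\mathbb{T},\ a,b\in\mathcal A).$$
Setting $\mu=a=b=0$ and replacing $c$ by $s^n c$, three applications of homogeneity yield a factor $s^{3n}$, and division together with the (faster) decay $s^{-3n}\phi(0,0,s^n c)\to 0$ produces in the limit
$$f(cc^*c)=f(c)c^*c+cf(c^*)c+cc^*f(c).$$

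To finish, I would deduce full $\mathbb{C}$-linearity of $f$. Taking $x=0$ in $f(sx)=sf(x)$ and using $s>1$ gives $f(0)=0$. Specialising $\mu=1$ in the first identity and substituting $U=(a+b)/r$, $V=(a-b)/r$ (just as in the proof of Theorem 2.2) recovers the classical Jensen equation $f(U)+f(V)=2f((U+V)/2)$, whence $f$ is additive. Letting $\mu$ range over $\mathbb{T}$ and invoking additivity upgrades this to $\mathbb{C}$-linearity via the standard $\mathbb{T}$-linearity argument, so together with the triple-product identity $f$ is a $J^*$-derivation.

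The principal obstacle is the scale mismatch between the contraction (3.2), expressed in terms of $r$, and the homogeneity, which operates at scale $s$: the rescaling argument needs $s^{-n}\phi(s^n a,\ldots)\to 0$, which is transparent when $s=r$ but requires separate justification otherwise. Once this compatibility is settled, the remaining steps are a direct transcription of the proof of Theorem 2.1, with the fixed-point/contraction hypothesis replacing the $\sum$-style decay assumption.
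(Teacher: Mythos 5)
Your reconstruction is exactly the argument the paper intends: the paper's entire proof of this theorem is the sentence ``It is similar to the proof of Theorem 2.1,'' and your two-step rescaling (Jensen part at weight $s^{n}$, triple-product part at weight $s^{3n}$, followed by the $U,V$ substitution and the $\mathbb{T}$-linearity upgrade) is a faithful transcription of that argument with the summability hypothesis replaced by the contraction (3.2). So in approach you are on target.

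The obstacle you flag at the end, however, is a genuine gap --- in your write-up and in the paper alike --- and it cannot be waved away as ``settling compatibility.'' Iterating (3.2) gives $r^{-n}\phi(r^{n}a,r^{n}b,r^{n}c)\le L^{n}\phi(a,b,c)\to 0$, i.e.\ decay along powers of $r$, whereas the homogeneity $f(sx)=sf(x)$ forces the rescaling in (3.1) to be performed along powers of $s$; for $s\ne r$ the needed limit $s^{-n}\phi(s^{n}a,s^{n}b,s^{n}c)\to 0$ does not follow from (3.2) alone, since that inequality constrains $\phi$ only along $r$-orbits and the points $s^{n}x$ generically lie on distinct $r$-orbits. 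Nor can you switch to rescaling by $r$, because $f(rx)=rf(x)$ is not a hypothesis. To close the gap you must either take $s=r$ (or $s$ an integer power of $r$), or impose a mild regularity on $\phi$ --- for instance that $t\mapsto\phi(ta,tb,tc)$ is nondecreasing on $[0,\infty)$, which holds for the power-type control functions of the subsequent corollary. Under that assumption write $s^{n}=r^{m_{n}}t_{n}$ with $t_{n}\in[1,r)$ and $m_{n}\to\infty$; then
$$s^{-n}\phi(s^{n}a,s^{n}b,s^{n}c)\le s^{-n}(rL)^{m_{n}}\phi(t_{n}a,t_{n}b,t_{n}c)\le t_{n}^{-1}L^{m_{n}}\phi(ra,rb,rc)\longrightarrow 0,$$
and the rest of your argument goes through verbatim.
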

\begin{proof} It is similar to the proof of Theorem 2.1.
\end{proof}

\begin{cor}\label{t4}
Let  $r,p\in (0,1), \theta \in [0,\infty)$ be real numbers. Suppose
$f:A \to A$  is a mapping satisfying $f(rx)=rf(x)$ for all $x\in
\mathcal A$. Let  $\phi:\mathcal A^3\to [0,\infty)$  be a mapping
satisfying (3.1). Then $f$ is  a $J^*-$derivation.

\end{cor}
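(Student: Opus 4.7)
The plan is to derive the corollary as a direct application of Theorem 3.4, by specializing the majorant to the power-type choice
\[\phi(a,b,c) := \theta\bigl(\|a\|^p + \|b\|^p + \|c\|^p\bigr)\]
for all $a,b,c \in \mathcal{A}$. With this $\phi$ the inequality displayed in the hypothesis of the corollary becomes literally (3.1), so that ingredient of Theorem 3.4 is free. The scaling hypothesis of Theorem 3.4, namely $f(sx)=sf(x)$ for some $s>1$, follows from the assumed identity $f(rx)=rf(x)$: if $r>1$ one simply takes $s=r$, while otherwise $f(rx)=rf(x)$ rearranges to $f(x/r)=r^{-1}f(x)$ and one takes $s=1/r>1$.

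The only genuine check is the contraction condition (3.2). This is a one-line homogeneity calculation: since $\phi(a/r,b/r,c/r)=r^{-p}\phi(a,b,c)$, one has
\[rL\,\phi\bigl(a/r,\,b/r,\,c/r\bigr) \;=\; L\,r^{1-p}\,\phi(a,b,c),\]
so the choice $L:=r^{p-1}$ produces equality in (3.2). In the parameter range the corollary is aimed at, this $L$ lies strictly in $(0,1)$, which is exactly the constraint of Theorem 3.4. Once that theorem is applied, its conclusion \emph{is} the conclusion of the corollary.

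The argument is essentially a template instantiation and there is no real obstacle. The only point that requires care is the bookkeeping around whether $r$ or $1/r$ plays the role of the expanding scale $s$: this choice dictates the sign that $p-1$ must take in order to force $L=r^{p-1}<1$, and so implicitly fixes the admissible $(r,p)$ range. Once that sign is settled, the rest of the proof is immediate.
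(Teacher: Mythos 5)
Your strategy---specialize $\phi$ to the power-type majorant $\theta(\|a\|^p+\|b\|^p+\|c\|^p)$ and feed the result into the superstability theorem (Theorem 3.4), converting $f(rx)=rf(x)$ with $r\in(0,1)$ into $f(sx)=sf(x)$ with $s=1/r>1$---is exactly the paper's intent, and your observation about passing from $r$ to $1/r$ is more careful than anything the paper writes down. But the crux of your argument contains a false assertion. You correctly compute that equality in (3.2) forces $L=r^{p-1}$, and you then claim that this $L$ lies in $(0,1)$ ``in the parameter range the corollary is aimed at.'' The corollary's stated range is $r,p\in(0,1)$, and there $p-1<0$ together with $0<r<1$ gives $r^{p-1}=(1/r)^{1-p}>1$. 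So no $L<1$ can satisfy (3.2) for this $\phi$, and Theorem 3.4 cannot be invoked as a black box. Your closing remark that the choice of expanding scale ``implicitly fixes the admissible $(r,p)$ range'' concedes the difficulty without resolving it: the range is fixed by the statement of the corollary, and the constant you produce is inadmissible on all of it.

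The repair is to measure the contraction along the expanding scale $s=1/r$ rather than along $r$. What the proof of Theorem 3.4 actually uses (via the argument of Theorem 2.1) is $\lim_n s^{-n}\phi(s^na,s^nb,s^nc)=0$; for the power-type $\phi$ this quantity equals $s^{n(p-1)}\phi(a,b,c)$, which tends to $0$ because $s>1$ and $p<1$, with effective contraction constant $s^{p-1}=r^{1-p}\in(0,1)$. Equivalently, one should verify the analogue of (3.2) with $s$ in place of $r$, namely $\phi(x,y,z)\le sL\,\phi(x/s,y/s,z/s)$ with $L=s^{p-1}<1$, and then run the Theorem 2.1 argument with that $s$. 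To be fair, the paper's own two-line proof is no better---it quotes $L=2^{p-1}$, which also fails to satisfy (3.2) as written---but the whole burden of this corollary is precisely the verification of the contraction hypothesis, and the constant you exhibit violates the hypothesis of the theorem you are applying.
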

\begin{proof}
Setting $\phi(x,y,z):=\theta(\|x\|^p+\|y\|^p+\|z\|^p)$ all $x,y,z
\in  \mathcal A.$ Then by $L=2^{p-1}$ in above theorem, we get the
desired result.
\end{proof}

{\small

}
\end{document}